\newcommand{\SL}{\mathop{/}}
\newcommand{\BS}{\mathop{\backslash}}
\newcommand{\CC}{\boldsymbol{!}\mathbf{L}^{\!*}}
\newcommand{\CCu}{\boldsymbol{!}\mathbf{L}^{\!*}_{\SL}}
\newcommand{\Dbb}{\mathbf{Db}{\boldsymbol{!}\boldsymbol{?}}}
\newcommand{\Dbbr}{\mathbf{Db}{\boldsymbol{!}\boldsymbol{?}}_{\mathbf{b}}}
\newcommand{\Ac}{\mathcal{A}}
\newcommand{\Bx}{\mathcal{B}}
\newcommand{\Rc}{\mathcal{R}}
\newcommand{\GAc}[1]{{!}\Gamma_{\!{#1}}}
\newcommand{\Gc}{\mathcal{G}}
\newcommand{\PERM}{\mathrm{perm}}
\newcommand{\CONTR}{\mathrm{contr}}
\newcommand{\CUT}{\mathrm{cut}}
\newcommand{\Var}{\mathrm{Var}}
\newcommand{\Lu}{\mathbf{L}^{\!*}}
\newcommand{\Luu}{\mathbf{L}^{\!*}_{\SL}}
\newcommand{\PP}{\mathfrak{p}}
\newcommand{\BUSZK}{\mathrm{B}}
\newcommand{\TQ}{\widetilde{y}^\PP}
\newcommand{\TZ}{\widetilde{z}^\PP}
\newcommand{\TW}{\widetilde{w}^\PP}
\newcommand{\EASY}{\mathrm{E}}
\newcommand{\Ba}{\mathbf{a}^\PP}
\newcommand{\Bb}{\mathbf{b}^\PP}
\newcommand{\Bc}{\mathbf{c}^\PP}
\newcommand{\Be}{\mathbf{e}^\PP}
\newcommand{\Bf}{\mathbf{f}^\PP}
\def\blfootnote{\xdef\@thefnmark{}\@footnotetext}
\newtheorem{theorem}{Theorem}
\newtheorem{lemma}{Lemma}
\theoremstyle{remark}
\newtheorem{example}{Example}
\newtheorem{remark}{Remark}
\begin{document}

\title{Undecidability of the Lambek Calculus with
a Relevant Modality}

\author{Max Kanovich}
\affil{{\small University College London; \newline
National Research University Higher School of Economics (Moscow)}}
\author{Stepan Kuznetsov}
\affil{{\small Steklov Mathematical Institute (Moscow)}}
\author{Andre Scedrov}
\affil{{\small University of Pennsylvania (Philadelphia); \newline
National Research University Higher School of Economics (Moscow)}}

\date{}

\maketitle

\blfootnote{The final publication (published in Proc. Formal Grammar 2015/2016,
LNCS vol.~9804, pp.~240--256) is available at Springer
via http://dx.doi.org/10.1007/978-3-662-53042-9\_14}

\begin{abstract}
Morrill and Valent{\'\i}n in the paper ``Computational coverage
of TLG: Nonlinearity'' considered an extension of the Lambek calculus
enriched by a so-called ``exponential'' modality. This modality
behaves in the ``relevant'' style, that is, it allows contraction and
permutation, but not weakening. Morrill and Valent{\'\i}n stated an open
problem whether this system is decidable. Here we show its
undecidability.
Our result remains valid if we consider the fragment
where all division
operations have one direction. We also show that the derivability
problem in a restricted case, where the modality can be applied
only to variables (primitive types), is decidable and belongs
to the NP class.
\end{abstract}

\section{The Lambek Calculus Extended by a Relevant Modality}

We start with the version of the Lambek calculus, $\Lu$, that allows
empty left-hand sides of sequents (introduced in~\cite{Lambek1961}). We will introduce $\CC$---an extension of $\Lu$ with one modality,
denoted by ${!}$.

Formulae of $\CC$ are built from a set of variables ($\Var = \{ p, q, r, \ldots \}$) using two
binary connectives, $\SL$ {\em (right division)} and $\BS$ {\em (left division)}, and
additionally one unary connective, ${!}$. Capital Latin letters denote formulae;
capital Greek letters denote finite (possibly empty) {\em linearly ordered sequences} of formulae.

Following the linguistic tradition, 
formulae of the Lambek calculus (and its extensions) are also called {\em types.} In this
terminology, variables are called {\em primitive types.}

We present $\CC$ in the form of sequent calculus. Sequents of $\CC$ are of the form
$\Pi \to A$, where $A$ is a formula and $\Pi$ is a finite (possibly empty) linearly ordered sequence of
formulae. $\Pi$ and $A$ are called the antecedent and the succedent respectively.

The axioms and rules of $\CC$ are as follows: 

$$
\infer{A \to A}{}
$$

$$
\infer[(\SL\to)]{\Delta_1, B \SL A, \Gamma, \Delta_2 \to C}{\Gamma \to A & \Delta_1, B, \Delta_2 \to C}
\qquad
\infer[(\to\SL)]{\Gamma \to B \SL A}{\Gamma, A \to B}
$$

$$
\infer[(\BS\to)]{\Delta_1, \Gamma, A \BS B, \Delta_2 \to C}{\Gamma \to A & \Delta_1, B, \Delta_2 \to C}
\qquad
\infer[(\to\BS)]{\Gamma \to A \BS B}{A, \Gamma \to B}
$$

$$
\infer[({!}\to)]{\Gamma_1, {!}A, \Gamma_2 \to C}{\Gamma_1, A, \Gamma_2 \to C}
\qquad
\infer[(\to{!})]{{!}A_1, \ldots, {!}A_n \to {!}B}{{!}A_1, \dots, {!}A_n \to B}
$$

$$
\infer[(\PERM_1)]{\Delta_1, \Gamma, {!}A, \Delta_2 \to C}{\Delta_1, {!}A, \Gamma, \Delta_2 \to C}
\qquad
\infer[(\PERM_2)]{\Delta_1, {!}A, \Gamma, \Delta_2 \to C}{\Delta_1, \Gamma, {!}A, \Delta_2 \to C}
$$

$$
\infer[(\CONTR)]{\Delta_1, {!}A, \Delta_2 \to C}{\Delta_1, {!}A, {!}A, \Delta_2 \to C}
$$

We call ${!}$ the {\em relevant modality}, since it behaves in a relevant logic style,
allowing contraction and permutation, but not weakening. Recall that in the original Lambek
calculus there is neither contraction, nor permutation, nor weakening. The modality
is introduced to restore contraction and permutation in a controlled way.

The cut rule is not officially included in $\CC$. Morrill and Valent\'{\i}n~\cite{MorVal} claim
that it is admissible and that this fact can be proved using the standard procedure
(cf.~\cite{LMSS}). In this paper we consider the system without cut and don't need its
admissibility.

We also consider fragments of $\CC$. 
Since there is no cut rule in this system, it enjoys the
subformula property, and therefore if we restrict the set of connectives, we obtain
conservative fragments of $\CC$: $\CCu$ (where we have only $\SL$ and ${!}$), 
$\Lu$ (this is the ``pure'' Lambek calculus without
${!}$), $\Luu$. 

As we discuss in more detail in~\cite{KanKuzSce}, $\Lu$ can be considered~\cite{Abrusci90}\cite{Yetter90} as a fragment of
non-commutative variant of Girard's linear logic~\cite{Girard87}.
Our modality, ${!}$, follows the spirit of the
exponential connective in linear logic, allowing contraction
and permutation. However, in contrast with the linear logic case,
we don't allow weakening. On the other hand, as we discuss
in Section~\ref{S:ling}, our ${!}$ is motivated from the
linguistical point of view.

\begin{theorem}\label{Th:main}
The derivability problem for $\CC$ is undecidable. Moreover, the 
derivability problem  is undecidable even for $\CCu$.
\end{theorem}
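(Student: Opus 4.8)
The plan is to reduce an undecidable word problem to the derivability problem for $\CCu$. Since $\CCu$ is a conservative fragment of $\CC$ (via the subformula property noted above, which holds because the system is cut\nobreakdash-free), undecidability for $\CCu$ immediately yields undecidability for $\CC$, so it suffices to treat $\CCu$. For the source of undecidability I would use the word problem for semi-Thue (string\nobreakdash-rewriting) systems --- equivalently, the halting problem for Turing machines coded in the standard way as a rewriting system; it is convenient to normalize the instance so that the target word is a single fresh primitive type $f$, so that the concluding ``accept'' step of a simulation is literally an instance of the axiom $f \to f$.

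\textbf{The coding.} Fix a semi-Thue system $\mathcal{P} = \{u_i \Rightarrow v_i\}$ over an alphabet $\Sigma$ and an input word $w$. Identify each $a \in \Sigma$ with a distinct primitive type, and code a word as a type built from $\SL$ --- either as a flat sequence of primitive types in the antecedent or as a single right\nobreakdash-nested $\SL$\nobreakdash-type, whichever makes the rewriting step expressible. Each production $u_i \Rightarrow v_i$ is coded by a type $R_i$ over $\SL$, engineered so that, once $R_i$ has been moved next to the code of an occurrence of $u_i$ in the antecedent, a single application of $(\SL\to)$ replaces that occurrence by the code of $v_i$. Since a production may be applied arbitrarily often and at any position, the $R_i$ are placed under ${!}$, and the candidate sequent is
$$
{!}R_1, \ldots, {!}R_n, \overline{w} \to f .
$$
Here $(\CONTR)$ supplies as many copies of each $R_i$ as the simulation needs, $(\PERM_1)$ and $(\PERM_2)$ let each copy slide to wherever the next rewriting step is to occur, and $({!}\to)$, $(\to{!})$ mediate between the guarded and unguarded forms.

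\textbf{Soundness} is the routine direction: a derivation $w \Rightarrow^{*} w'$ in $\mathcal{P}$ is simulated step by step --- contract off a fresh $R_i$, permute it into place, fire $(\SL\to)$ --- and $f \to f$ closes the branch. One genuine subtlety already shows up here: ${!}$ gives contraction but \emph{not} weakening, so copies of the $R_i$ that the simulation does not consume cannot simply be discarded. This must be neutralized by a dedicated gadget --- for instance, taking $\mathcal{P}$ symmetric (the word problem for Thue systems is still undecidable) and attaching to the word a small auxiliary block on which any surplus copy of a production is absorbed by a pair of mutually inverse rewriting steps, with the block itself eventually erased; alternatively, one may use a single ${!}$\nobreakdash-guarded ``transition'' formula whose number of copies is forced to equal the number of computation steps. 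Making such a gadget interact cleanly with everything else, while remaining inside the $\SL$\nobreakdash-only fragment, is the fiddly part of this direction.

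\textbf{Faithfulness} is the hard direction and the main obstacle. One must show that every cut\nobreakdash-free $\CCu$\nobreakdash-derivation of the candidate sequent can be brought to a normal form that does nothing but the intended simulation, so that from it an honest $\mathcal{P}$\nobreakdash-derivation $w \Rightarrow^{*} w'$ can be read off. Because there is no cut, the subformula property is available, and the argument becomes a careful accounting of which subformulas can meet which during proof search: the types $R_i$ have to be designed precisely so that a $\SL$\nobreakdash-formula descending from $R_i$ has no parasitic way to interact --- with a primitive type, with another $R_j$, or with one of its own proper subformulas --- other than the single move that codes applying production $i$ to an occurrence of $u_i$. Establishing this invariant, together with the claim that the auxiliary\nobreakdash-block moves are genuinely harmless, is where essentially all the work lies, and it is the shape of this case analysis that dictates the exact definition of the $R_i$.
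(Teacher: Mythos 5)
Your high-level strategy---encode an undecidable rewriting problem, place the codes of the productions under ${!}$ so that contraction and permutation let them be reused anywhere, and then argue soundness and faithfulness---is indeed the same general idea as the paper's. But as written the proposal has two genuine gaps, and on both points the paper does something different from what you sketch. First, the claim that a production $u_i \Rightarrow v_i$ can be fired by ``a single application of $(\SL\to)$ replacing that occurrence by the code of $v_i$'' does not survive scrutiny in the product-free fragment: $(\SL\to)$ applied to a type $B \SL A$ consumes an adjacent subsequence deriving $A$ and leaves the \emph{single formula} $B$ in its place, and in $\Luu$ there is no way to later unfold one antecedent formula into a sequence of primitive types. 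So for productions whose right-hand side has length $\ge 2$, applied in the middle of the word, the one-step encoding fails. This is precisely the difficulty that forces the paper (following Buszkowski) to simulate the non-context-free productions $v_1 v_2 \Rightarrow w$ with the seven-rule ``conveyor belt'' $(1_\PP)$--$(7_\PP)$, which marks letters, moves the redex to the edge of the antecedent, rewrites it there, and moves everything back; designing the $R_i$ is not a detail to be dictated later by the case analysis, it is the core of the construction.

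Second, your treatment of the missing weakening rule is both unproven and aimed in a different direction from the paper's actual fix. You propose to keep the full multiset ${!}R_1,\dots,{!}R_n$ in every candidate sequent and absorb unused copies with an auxiliary block of mutually inverse rewriting steps; but that block must itself be erased, the block must contain a redex for \emph{every} production, and the faithfulness argument must then show that interleaving garbage moves on the block with real moves on $\overline{w}$ creates no new derivabilities---none of which is carried out, and you yourself flag both this and the normal-form argument as ``where essentially all the work lies.'' The paper avoids the gadget entirely: it first proves undecidability of an axiomatic extension $\Lu+\Rc$ (Theorems~\ref{Th:Buszkcutelim}--\ref{Th:Buszkundec}), and then proves a relevant-style deduction theorem (Theorem~\ref{Th:deduction}) stating that $\Lu+\Rc \vdash \Pi\to A$ iff $\CC \vdash \GAc{\Bx},\Pi\to A$ for \emph{some} subset $\Bx \subseteq \Gc_\Rc$. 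Since $\Gc_\Rc$ is finite, one simply tries all subsets, and undecidability transfers without ever needing to discard an unused ${!}$-formula. In short: your plan identifies the right obstacles but resolves neither of them, and the two places where you defer the work are exactly the places where the paper's argument lives.
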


\begin{remark}
$\CC$ has been constructed as a conservative fragment of a larger system $\Dbb$, introduced
in~\cite{MorVal}. Thus Theorem~\ref{Th:main}
provides undecidability of $\Dbb$ (solving an open question raised in~\cite{MorVal}).
\end{remark}


\section{Linguistic Examples and Motivations}\label{S:ling}

In this section we start from the standard examples
of Lambek-style syntactic analysis~\cite{Lambek1958}\cite{Carpenter} and then
follow~\cite{MorVal}.

In syntactic formalisms based on the Lambek calculus and
its variants, Lambek types (formulae) denote syntactic categories.
We use the
following standard primitive types: $n$ stands for {\em common noun}
(like {\sl ``book''} or {\sl ``person''}); $np$ stands for {\em noun phrase}
(like {\sl ``John''} or {\sl ``the book''});
$s$ stands for the whole {\em sentence.} Actually, $n$ and $np$ represent not only
isolated nouns and noun phrases, but also syntactic groups with
similar properties: {\em e.g.,} {\sl ``the red book''} or 
{\sl ``the person whom John met yesterday,''}
from the lingustic point of view, should be treated as a noun
phrase ($np$) as well. The latter cannot be proved to be of
type $np$ by means of $\Lu$, but $\CC$ can handle this.

For simplicity, in our examples we don't distinguish singular and plural forms.

Other parts of speech receive compound types: $np \BS s$ stands
for {\em intransitive verb} (like {\sl ``runs''} or {\sl ``sleeps''}); $(np  \BS s) \SL np$
stands for {\em transitive verb} ({\sl ``likes,'' ``reads,'' ``met,'' ``admire''});
$(np \BS s) \BS (np \BS s)$ is the type for {\em adverbs} like {\sl ``yesterday''}
(it takes an intransitive verb group from the left-hand side and
yields a compound intransitive verb group); $np \SL n$  is the type
for {\sl ``the,''} etc.

If the sequent $A_1, \dots, A_n \to B$ is derivable in the Lambek calculus
or its extension, syntactic objects of type $A_1, \dots, A_n$, taken together
in the specified linear order, are considered to form an object of type $B$. For example,
since $(np \BS s) \SL np, np \SL n, n \to np \BS s$ is derivable, 
{\sl ``reads the book''} is an
expression of type $np \BS s$, or, in other words, acts as an intransitive verb.

\begin{example} \strut
\begin{center}
{\sl ``John met Pete.''} 
\qquad
{\sl ``John met Pete yesterday.''}
\end{center}
These two sentences receive type $s$, since the sequents
$np, (np \BS s) \SL np, np \to s$ and $np, (np \BS s) \SL np, np, (np \BS s) \BS (np \BS s) \to s$
are both derivable in $\Lu$.
\end{example}

\begin{example}\strut
\begin{center}
{\sl ``the person whom John met''}
\end{center}
As mentioned above, we want this phrase to receive type $np$. This is obtained
by assigning type $(n \BS n) \SL (s \SL np)$ to {\sl ``whom.''}
{\sl ``John met''} has type $s \SL np$, which means ``a sentence that lacks
a noun phrase on the right-hand side.'' In other terms, 
we have a {\em gap} after {\sl ``met.''}
In Example~1 this gap is filled by {\sl ``Pete,''} and here it is intentionally left blank.
\end{example}

\begin{example} \strut\label{Ex:yesterday}
\begin{center}
{\sl ``the person whom John met yesterday''}
\end{center}
Here the gap appears in the middle of the clause (between {\sl ``met''} and {\sl ``yesterday''}),
therefore {\sl ``John met yesterday''} is neither of type $s \SL np$, nor of type $np \BS s$.
This situation is called {\em medial extraction} and is not handled by $\Lu$.

To put $np$ into the gap, we use ${!}$ and the $(\PERM_1)$ rule: 
$$
\infer{
\begin{matrix}
np \SL n, & n, & (n \BS n) \SL (s \SL {!}np), & np, & (np \BS s) \SL np, & (np \BS s) \SL (np \BS s) & \to np\\
\text{\sl the} & \text{\sl person} & \text{\sl whom} & \text{\sl John} &  \text{\sl met} &
\text{\sl yesterday}
\end{matrix}
}
{np \SL n, n, n \BS n \to np\qquad & 
\infer{np, (np \BS s) \SL np, (np \BS s) \SL (np \BS s) \to s \SL {!} np}
{\infer[(\PERM_1)]{np, (np \BS s) \SL np, (np \BS s) \SL (np \BS s), {!}np \to s}
{\infer[({!}\to)]{np, (np \BS s) \SL np, {!}np, (np \BS s) \SL (np \BS s) \to s}
{np, (np \BS s) \SL np, np, (np \BS s) \SL (np \BS s) \to s}}}}
$$

The sequent on top is the same schema as for {\sl ``John met Pete yesterday''} (see Example~1).

Note that $s \SL {!}np$ and ${!}np \BS s$ are equivalent (due to the permutation rules).
\end{example}

\begin{example}\strut
\begin{center}
{\sl ``the paper that John signed without reading''}
\end{center}

Finally, this is the case called {\em parasitic extraction,}
with two $np$ gaps (after {\sl ``signed''} and after {\sl ``reading''}).
If the {\sl that}-clause were an independent sentence, the gaps would
have been filled like this:
{\sl ``John signed the paper without reading the paper.''} To fill both
 gaps with the same $np$, we use the $(\CONTR)$ rule:
 
$$
\small
\infer{
\begin{matrix}
np \SL n,  n, & (n \BS n) \SL (s \SL {!}np), & np, & 
(np \BS s) \SL np, & ((np \BS s) \SL (np \BS s)) \SL np, &
np \SL np & \to np\\
\text{\sl the paper} & \text{\sl that} &
\text{\sl John} & \text{\sl signed} & \text{\sl without} &
\text{\sl reading}
\end{matrix}
}{np \SL n, n, n \BS n \to np  &
\infer{np, (np \BS s) \SL np, ((np \BS s) \SL (np \BS s)) \SL np, np \SL np \to
s \SL {!}np}
{\infer{np, (np \BS s) \SL np, ((np \BS s) \SL (np \BS s)) \SL np, np \SL np, {!}np \to s}
{\infer{np, (np \BS s) \SL np, ((np \BS s) \SL (np \BS s)) \SL np, np \SL np, {!}np, {!}np \to s}
{\infer{np, (np \BS s) \SL np, {!}np, ((np \BS s) \SL (np \BS s)) \SL np, np \SL np,  {!}np \to s}
{np, (np \BS s) \SL np, np, ((np \BS s) \SL (np \BS s)) \SL np, np \SL np,  np \to s}}}}}
$$

Here {\sl ``that''} acts exactly as {\sl ``whom,''} and {\sl ``without''}
modifies the verb group like {\sl ``yesterday''} does, but also requires
a noun phrase {\sl ``reading the paper''} on the right side.
The sequents on the top
 are easily derivable in $\Lu$.

\end{example}

\begin{remark}
Our calculus $\CC$, as well as $\Dbb$, works well for pure complex sentences and pure compound
sentences. However, we meet with difficulties in the mixed case, caused
by sophisticated nature of {\sl ``and''} and the like.
For example, the fact that {\sl ``John met Pete yesterday and Mary met Ann today''}
has type $s$, leads to and unwanted classification of 
{\sl *``the person whom John met yesterday and Mary met Ann today''}
as a noun phrase (type $np$), cf. Example~\ref{Ex:yesterday}. 
In order to address this issue, Morrill
and Valent{\'\i}n~\cite{MorVal} suggest another variant of the
system, denoted by $\Dbbr$. This variant includes {\em brackets} that
disallow gapping in certain situations. Morrill and Valent{\'\i}n pose
the decidability question both for $\Dbb$ and $\Dbbr$. In this paper
we solve the first question.
\end{remark}

\begin{remark}
The whole system $\CC$ turns out to be undecidable
(Theorem~\ref{Th:main}). On the other hand, notice that in these
examples and the like can be treated using types of a very restricted form. Namely, ${!}$ is
applied only to a primitive type (for instance, ${!}np$). 
In Section~\ref{S:decidable} we show that this restricted fragment of $\CC$
is decidable. Moreover, it belongs to NP, {\em i.e.,} can be resolved by
a nondeterministic polynomial algorithm.
\end{remark}

\section{$\Lu$ with Buszkowski's Rules}

In this section we build an undecidable extension of $\Lu$
with a finite set of rules,
generally following the construction by W.~Buszkowski from~\cite{Buszko1982}.
Buszkowski, however, considers another version of the Lambek calculus,
$\mathbf{L}$, introduced in~\cite{Lambek1958}.
The difference between $\mathbf{L}$ and $\Lu$ is the so-called Lambek's
restriction: in $\mathbf{L}$, the antecedents of all sequents are forced to be non-empty.
In this paper, following Morrill and Valent\'{\i}n~\cite{MorVal}, we allow empty antecedents, and Lambek's restriction is not
valid in $\Lu$ ({\em e.g.,} $\to p\SL p$ is derivable in $\Lu$). The relationship
between $\mathbf{L}$ and $\Lu$
is very subtle. For instance, the sequent $q \SL (p \SL p) \to q$
is derivable in $\Lu$, but becomes underivable when Lambek's restiction is
imposed (despite the fact that this sequent itself has a non-empty antecedent). Therefore
one has to be very cautious with this issue, and for this reason here we provide a modification
of Buszkowski's construction for $\Lu$ rather than directly use results 
from~\cite{Buszko1982}.

Let $\Lu + \Rc$ be $\Lu$ extended
with a finite set 
$\Rc$ of rules of two special forms:
$$
\infer[(\BUSZK_1)]
{\Pi_1, \Pi_2 \to r}{\Pi_1 \to p & \Pi_2 \to q}
\qquad\raisebox{5pt}{\mbox{or}}\qquad
\infer[(\BUSZK_2),]
{\Pi \to r}{\Pi, q \to p}
$$
where $p, q, r$ are fixed {\em primitive} types.
We call these rules {\em Buszkowski's rules.}

\begin{theorem}\label{Th:Buszkcutelim}
The cut rule 
$$
\infer[(\CUT)]
{\Delta_1, \Pi, \Delta_2 \to C}
{\Pi \to A & \Delta_1, A, \Delta_2 \to C}
$$
is admissible in $\Lu + \Rc$ for an arbitrary
set $\Rc$ of Buszkowski's rules.
\end{theorem}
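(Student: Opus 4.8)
The plan is to run the standard Gentzen-style cut-elimination argument for the Lambek calculus and to verify that the two new schemes $(\BUSZK_1)$ and $(\BUSZK_2)$ create no obstruction. As usual, it suffices to show that a single lowermost application of $(\CUT)$ whose two premises $\Pi \to A$ and $\Delta_1, A, \Delta_2 \to C$ have cut-free derivations in $\Lu + \Rc$ can be removed; the full statement then follows by a straightforward induction on the number of cuts in a derivation. For one cut, I would argue by nested induction: the outer parameter is the number of connectives in the cut formula $A$, and the inner parameter is the sum of the heights of the cut-free derivations of the two premises.

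The routine cases are handled exactly as for pure $\Lu$. If either premise is an axiom, the conclusion is (a relabelling of) the other premise. If $A$ is \emph{not} principal in the last rule of the left premise $\Pi \to A$, then that rule is $(\SL\to)$ or $(\BS\to)$ — these are the only rules of $\Lu + \Rc$ with a non-principal succedent, since $(\to\SL)$, $(\to\BS)$, $(\BUSZK_1)$, $(\BUSZK_2)$ all make their succedent principal — and the cut is permuted upward into the premise of that rule carrying the succedent $A$, strictly decreasing the inner parameter; the rule is then reapplied below the smaller cut. Symmetrically, if $A$ is principal in the left premise but not in the last rule of the right premise $\Delta_1, A, \Delta_2 \to C$, the cut is pushed upward on the right. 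The only new point here is that the last rule of the right premise may be $(\BUSZK_1)$ or $(\BUSZK_2)$; but these schemes are completely schematic in their antecedent sequences, so the displayed occurrence of $A$ (which lies entirely inside one of those sequences) may be replaced by $\Pi$ to yield a legitimate instance, the cut permuting through without difficulty — for $(\BUSZK_1)$ it descends into whichever of the two premises actually contains $A$, and in all cases the inner parameter decreases and no duplication of the cut is needed, as $\Lu + \Rc$ is contraction-free.

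The only case requiring a genuine observation is the principal case, where $A$ is principal in the last rules of both premises. The key point is that a \emph{primitive} type can be principal in the antecedent of a sequent only through an axiom: no rule of $\Lu$, and neither Buszkowski rule, introduces a primitive type on the left-hand side. Hence if $A$ is primitive, its being principal on the right forces the right premise to be $A \to A$, which was already handled. Consequently the only genuine principal reductions are those in which $A$ is a division $B \SL A'$ or $A' \BS B$, and these involve only the rules $(\SL\to), (\to\SL), (\BS\to), (\to\BS)$ — untouched by $\Rc$ — so each is the usual Lambek reduction replacing the cut by two cuts on the strictly smaller formulas $A'$ and $B$, discharged by the outer induction hypothesis. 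I expect this to be the \emph{only} step where one must pause, precisely to confirm that $(\BUSZK_1)$ and $(\BUSZK_2)$ — having primitive succedents and introducing nothing on the left — generate no new principal configuration; once that is noted, the rest of the argument is bookkeeping identical to cut elimination for $\Lu$.
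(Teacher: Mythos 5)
Your proposal is correct and follows essentially the same route as the paper: a double induction with the complexity of the cut formula as the outer measure and the sum of the premise derivation heights as the inner one, permuting the cut upward in the non-principal cases and reducing to smaller cuts in the $\SL$/$\BS$ principal case. Your key observation---that Buszkowski's rules have primitive succedents and introduce nothing into the antecedent, so a primitive cut formula can be principal on the left of the right premise only via an axiom---is exactly the paper's remark that the ``bad'' principal--principal configuration involving two Buszkowski rules cannot occur.
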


\begin{proof}
We proceed by double induction. 
We consider a number of cases, and in each of them the cut either
disappears, or is replaced by cuts with simpler cut formulae ($A$),
or is replaced by a cut for which the depth of at least one
derivation tree of a premise ($\Pi \to A$ or $\Delta_1, B, \Delta_2 \to C$) is
less than for the original cut, and the cut formula remains the same.
Thus by double induction (on the outer level---on the complexity of $A$,
on the inner level---on the sum of premise derivation tree depths) we get
rid of the cut.

{\bf Case 1:} $A$ is not the type that is introduced by the lowermost rule
in the derivation of $\Delta_1, A, \Delta_2 \to C$.
In this case $(\CUT)$ can be interchanged with that lowermost rule.
Consider the situation when it was $(\BUSZK_1)$ (other cases are similar):
$$
\small
\infer[(\CUT)]
{\Delta_1', \Pi, \Delta_1'', \Delta_2 \to r}
{\Pi \to A & \infer[(\BUSZK_1)]{\Delta_1', A, \Delta_1'', \Delta_2 \to r}
{\Delta_1', A, \Delta_1'' \to p & \Delta_2 \to q}}
$$
\centerline{\turnbox{270}{$\leadsto$}}
\vskip 10pt
$$
\small
\infer[(\BUSZK_1)]
{\Delta_1', \Pi, \Delta_1'', \Delta_2 \to r}
{\infer[(\CUT)]{\Delta_1', \Pi, \Delta_1'' \to p}
{\Pi \to A & \Delta_1', A, \Delta_1'' \to p}
 & \Delta_2 \to q}
$$

{\bf Case 2:} $A = E \SL F$, and it is introduced by the lowermost rules both
into $\Pi \to A$ and into $\Delta_1, A, \Delta_2 \to C$.

$$
\small
\infer[(\CUT)]
{\Delta_1, \Gamma, \Pi, \Delta_2 \to C}
{\infer[(\to\SL)]{\Gamma \to E \SL F}{\Gamma, F \to E}
 &
\infer[(\SL\to)]{\Delta_1, E \SL F, \Pi, \Delta_2 \to C}
{\Pi \to F & \Delta_1, E, \Delta_2 \to C}}
$$
\centerline{\turnbox{270}{$\leadsto$}}
\vskip 10pt
$$
\small
\infer[(\CUT)]
{\Delta_1, \Gamma, \Pi, \Delta_2 \to C}
{\infer[(\CUT)]{\Gamma, \Pi \to E}{\Pi \to F & \Gamma, F \to E}
& \Delta_1, E, \Delta_2 \to C}
$$

Case 2 for $\BS$ is handled symmetrically.

{\bf Case 3:} one of the premises of $(\CUT)$ is the axiom
($A \to A$). Then the goal coincides with the other premise.

Note that since $(\BUSZK_1)$ and $(\BUSZK_2)$ introduce
new primitive types only into the succedent, the ``bad'' case,
where both premises of the cut rule are derived using Buszkowski's
rules and the cut formula is the formula introduced by both of them,
does not occur. This is the key trick that allows to formulate
the extended calculus in a cut-free way. 
\end{proof}

In the presence of $(\CUT)$ Buszkowski's
rules $(\BUSZK_1)$ and $(\BUSZK_2)$ are equivalent
to axioms $p,q\to r$ and $p \SL q \to r$ respectively,
as shown by the following derivations:

$$
\infer[(\CUT)]
{\Pi_1, \Pi_2 \to r}
{\Pi_1 \to p & 
\infer[(\CUT)]{p, \Pi_2 \to r}
{\Pi_2 \to q & p, q \to r}}
\qquad
\infer[(\CUT)]
{\Pi \to r}
{\infer[(\to\SL)]{\Pi \to p \SL q}
{\Pi, q \to p} & p \SL q \to r}
$$
and in the opposite direction:
$$
\raisebox{10pt}{
\infer[(\BUSZK_1)]{\strut p, q \to r}{p \to p & q \to q}
}
\qquad
\infer[(\BUSZK_2)]{p \SL q \to r}{
\infer[(\SL\to)]{p \SL q, q \to p}{q \to q & p \to p}}
$$

From this perspective, $\Lu + \Rc$ can be viewed as
a finite {\em axiomatic extension} of $\Lu$ (with 
non-logical axioms of a special kind).
However, for our purposes it is more
convenient to consider rules instead of axioms.

\begin{theorem}\label{Th:Buszkrecursive}
Let $M$ be a recursively enumerable set of words over an alphabet $\Sigma$
without the empty word. If
 $\Sigma \subset \Var$, and $\Var$ also
 contains an infinite number of variables not
 belonging to $\Sigma$, then there exists a finite
 set $\Rc_M$ of Buszkowski's rules and $s \in \Var$ 
 such that for any word $a_1 \dots a_n$ over $\Sigma$
$$
a_1 \dots a_n \in M \text{\quad if{f}\quad }
a_1, \dots, a_n \to s \text{ is derivable in $\Lu + \Rc_M$}.
$$
\end{theorem}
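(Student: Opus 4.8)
The plan is to encode a Turing machine (or, equivalently, a type-0 semi-Thue/phrase-structure grammar) generating $M$ into a finite set $\Rc_M$ of Buszkowski's rules, so that the derivations of $a_1,\dots,a_n\to s$ in $\Lu+\Rc_M$ simulate exactly the productions of the grammar. First I would fix a type-0 grammar $G$ over $\Sigma$ with $M=L(G)$; using the classical normal form, every production of $G$ can be taken to be of the form $x\,y\to z$ or $x\to y\,z$ or $x\to y$, where $x,y,z$ range over a finite set of nonterminals $N$ (here I exploit the hypothesis that $\Var\setminus\Sigma$ is infinite, so there is room for $N$ and the designated start symbol $s$). Each production $x\,y\to z$ becomes a rule $(\BUSZK_1)$ reading $\Pi_1,\Pi_2\to z$ from $\Pi_1\to x$ and $\Pi_2\to y$; each production $x\to y\,z$ is handled by a $(\BUSZK_2)$-rule together with auxiliary rules that split the succedent (or, more directly, one introduces a fresh primitive type and a $(\BUSZK_2)$-rule $\Pi\to x$ from $\Pi,z\to y$ after first producing $\Pi,z\to y$ from a $(\BUSZK_1)$-style step); unit productions $x\to y$ become $(\BUSZK_1)$ applied degenerately or a $(\BUSZK_2)$-rule of the shape $\Pi\to y$ from $\Pi\to x$ read off as $\Pi, {\cdot}\to{\cdot}$. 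The point is that each grammar production corresponds to an axiom $x,y\to z$, $x\SL y\to z$, etc., and by the remark following Theorem~\ref{Th:Buszkcutelim} these axioms are interchangeable with Buszkowski's rules in the presence of cut, which is admissible by Theorem~\ref{Th:Buszkcutelim}.

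For the $(\Leftarrow)$ direction I would argue in the cut-full system $\Lu+\Rc_M$ (legitimate by Theorem~\ref{Th:Buszkcutelim}) and show by induction on a cut-free derivation that whenever $a_1,\dots,a_n\to s$ is derivable with all $a_i\in\Sigma$, then $s\Rightarrow^*_G a_1\cdots a_n$. The key structural fact is that the only rules that can appear in such a derivation are the Buszkowski rules and the axioms, because the antecedent and succedent contain only primitive types and $\Lu$'s logical rules all require a non-primitive principal formula somewhere; so the derivation is literally a parse tree of $G$ read upside down. Conversely, for $(\Rightarrow)$, a derivation $s\Rightarrow^*_G a_1\cdots a_n$ translates step by step into applications of the corresponding Buszkowski rules starting from axioms $a_i\to a_i$, yielding $a_1,\dots,a_n\to s$.

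The main obstacle is the interaction with Lambek's empty-antecedent issue and the subtlety flagged in the surrounding text: I must ensure the simulation is faithful \emph{exactly} for sequents with antecedents drawn from $\Sigma$, and that spurious derivations using $\Lu$'s own theorems (such as $\to p\SL p$ or $q\SL(p\SL p)\to q$) cannot inject unintended primitive types into a putative parse. The safeguard is to keep all alphabet letters and all grammar nonterminals as primitive types that never occur inside a $\SL$ on the right-hand side of any axiom except in the controlled way dictated by $(\BUSZK_2)$, and to observe that in a cut-free derivation of a sequent built entirely from primitive types, no $(\to\SL)$ or $(\to\BS)$ can ever fire (its conclusion has a compound succedent) and no $(\SL\to)$ or $(\BS\to)$ can fire either (it needs a compound formula in the antecedent); hence the only applicable rules are the axiom and the members of $\Rc_M$, which pins the derivations down to grammar parse trees and makes the equivalence exact. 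The word $a_1\cdots a_n$ being non-empty matches the requirement that antecedents in genuine parses are non-empty, so no boundary case involving the empty sequent arises.
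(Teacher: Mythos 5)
Your proposal correctly identifies the easy half of the construction, but it is missing the one idea that carries the whole proof, namely how to simulate the \emph{non-context-free} productions of the grammar. Under the encoding you (and the paper) use --- antecedent $=$ generated string, succedent $=$ generating symbol --- a binary production that expands one symbol into two translates directly into a $(\BUSZK_1)$ rule, and a cut-free derivation of an all-primitive sequent built only from such rules is indeed an upside-down parse tree. But $(\BUSZK_1)$ rules alone can only ever yield a context-free language, so they cannot capture an arbitrary r.e.\ set $M$. The productions that give you full r.e.\ power are the contracting ones, $v_1 v_2 \Rightarrow w$: to simulate one, you must pass from derivability of $\eta, v_1, v_2, \theta \to x$ to derivability of $\eta, w, \theta \to x$, i.e.\ perform a replacement \emph{in the middle} of the antecedent. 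Buszkowski's rules only touch the edges: $(\BUSZK_1)$ concatenates two antecedents, and $(\BUSZK_2)$ manipulates the rightmost formula. Your sentence about handling these productions ``by a $(\BUSZK_2)$-rule together with auxiliary rules that split the succedent'' does not describe any mechanism for reaching the interior of the antecedent, and no single $(\BUSZK_2)$ instance can. The paper's proof spends essentially all of its effort exactly here: for each contracting production and each possible succedent it introduces marked copies of every alphabet symbol plus five fresh control variables, and a cycle of seven rule schemata that rotate the antecedent symbol by symbol until $v_1, v_2$ sit at the right edge, apply the production there via a $(\BUSZK_2)$-style step, and then rotate everything back while unmarking. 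Without some such conveyor-belt gadget (or an equivalent device) your construction only proves the theorem for context-free $M$.

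Two smaller points. First, your treatment of unit productions $x \to y$ does not work as stated: $(\BUSZK_1)$ has no one-premise ``degenerate'' form (forcing one premise to have an empty antecedent is precisely the empty-antecedent trap the paper warns about), and $(\BUSZK_2)$ requires an extra formula at the right end of the premise's antecedent; the paper instead eliminates unit productions by factoring each one through two binary productions with fresh nonterminals. Second, your structural observation for the $(\Leftarrow)$ direction (only axioms and Buszkowski rules can occur in a derivation of an all-primitive sequent) is correct and matches the paper, but the paper additionally needs a careful analysis of which fresh variables can occur as succedents in order to pin the derivation into the canonical rotate-apply-rotate-back shape; that analysis is absent from your sketch and cannot be supplied until the gadget itself is in place.
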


We shall use the fact that any
 recursively enumerable language without the empty
word can be generated by a 
{\em binary grammar}~\cite{Buszko2005}. A binary
grammar is a quadruple $G = \langle N, \Sigma, P, s 
\rangle$, where $N$ and $\Sigma$ are disjoint alphabets
($\Sigma$ is the original alphabet of the language),
$s \in N$, and $P$ is a finite set of 
{\em productions} of the form%
\footnote{In the definition from~\cite{Buszko2005},
$P$ could also include productions
 of the form $u \Rightarrow v$
for $u, v \in N \cup \Sigma$. Such a rule can be
equivalently replaced by two productions 
$u \Rightarrow w_1 w_2$, $w_1 w_2 \Rightarrow v$,
where $w_1$ and $w_2$ are new elements added to $N$ 
(different for different rules). We encode these simple
productions using more complex ones in order to reduce
the number of cases to be considered in the proofs.}
$$w \Rightarrow v_1 v_2 \text{\quad or\quad } v_1 v_2 \Rightarrow w,$$ where
$v_1, v_2, w \in N \cup \Sigma$.
If $(\alpha \Rightarrow \beta) \in P$ and $\eta$ and
$\theta$ are arbitrary (possibly empty) words over $N \cup \Sigma$, then
$\eta\alpha\theta \Rightarrow_G \eta\beta\theta$. The
relation $\Rightarrow^*_G$ is the reflexive-transitive
closure of $\Rightarrow_G$. Finally, the language
generated by $G$ is the set of all words $a_1 \dots a_n$
over $\Sigma$ such that $s \Rightarrow^*_G a_1 \dots a_n$.

\begin{proof}
Let $M$ be an arbitrary recursively enumerable language without
the empty word and
let $G$ be a binary grammar that generates $M$. 
We construct the corresponding
extension of $\Lu$. Let $N \cup \Sigma \subset \Var$,
and let $\Var$ contain an infinite number of extra
fresh variables that we'll need later.
For every production 
$(w \Rightarrow v_1 v_2) \in P$ we add one rule
$$
\infer[(\EASY)]
{\Delta_1, \Delta_2 \to w}
{\Delta_1 \to v_1 & \Delta_2 \to v_2}
$$

For productions of the form $v_1 v_2 \Rightarrow w$ the
 construction
is more complex. First for every pair
$\PP = \langle (v_1 v_2 \Rightarrow w), 
x \rangle$, where 
$(v_1 v_2 \Rightarrow w) \in P$ and 
$x \in N \cup \Sigma$,
we introduce new variables
$\TQ$ for every $y \in N \cup \Sigma$ and
five extra variables $\Ba$, $\Bb$, $\Bc$, $\Be$,
$\Bf$. Then  for every $\PP$ we add the following rules.
Some of these rules are not in Buszkowski's form. 
We'll transform them into the correct format below.

\begin{center}
\begin{tabular}{c@{\qquad\qquad}c}
$
\infer[(1_\PP)]
{\Delta_1, \Delta_2 \to \Ba}
{\Delta_1 \to \Be & \Delta_2 \to x}
$
&
$
\infer[(2_\PP)]
{\Delta_1, \Delta_2 \to \Ba}
{\Delta_1 \to \TQ & \Delta_2, y \to \Ba}
$\\[7pt]
$
\infer[(3_\PP)]
{\Delta_1, \Delta_2 \to \Bb}
{\Delta_1 \to \TW & \Delta_2, v_1, v_2 
\to \Ba}
$
&
$
\infer[(4_\PP)]
{\Delta_1, \Delta_2 \to \Bb}
{\Delta_1 \to \TQ & \Delta_2, y \to \Bb}
$
\\[7pt]
$
\infer[(5_\PP)]
{\Delta_1, \Delta_2 \to \Bc}
{\Delta_1 \to \Bf & \Delta_2, \Be \to \Bb}
$
&
$
\infer[(6_\PP)]
{\Delta_1, \Delta_2 \to \Bc}
{\Delta_1 \to y  & \Delta_2, \TQ \to \Bc}
$
\\[7pt]
$
\infer[(7_\PP)]
{\Delta \to x}
{\Delta, \Bf \to \Bc}
$
\end{tabular}
\end{center}

As already said, some of these rules are not actually Buszkowski's
rules. However, any rule of the form
$$
\infer{\Delta_1, \Delta_2 \to t}
{\Delta_1 \to p & \Delta_2, q \to r}
$$
(these are rules $(2_\PP)$, $(4_\PP)$, $(5_\PP)$,
and $(6_\PP)$)
can be equivalently replaced by two rules
$$
\infer[(\BUSZK_2)]{\Delta \to u}{\Delta, q \to r}
\qquad\raisebox{5pt}{\mbox{and}}\qquad
\infer[(\BUSZK_1)]{\Delta_1, \Delta_2 \to t}
{\Delta_1 \to p & \Delta_2 \to u}$$
where $u$ is a fresh variable. 

Similarly, $(3_\PP)$ is a shortcut for three rules:
$$
\infer[(\BUSZK_2)]
{\Delta \to u_1}
{\Delta, v_2 \to \Ba}
\qquad
\infer[(\BUSZK_2)]
{\Delta \to u_2}
{\Delta, v_1 \to u_1} 
\qquad
\infer[(\BUSZK_1)]
{\Delta_1, \Delta_2 \to \Bb}
{\Delta_1 \to \TW & \Delta_2 \to u_2}
$$

Rules $(1_\PP)$, $(7_\PP)$, and $(\EASY)$ are already
in the correct format.  Thus we've actually constructed a calculus of the
form $\Lu + \Rc$. Denote it by $\Lu + \Rc_M$.

Now to achieve our goal it is sufficient to prove
that for any $x, z_1, \dots, z_m \in N \cup \Sigma$
$$
x \Rightarrow^*_G z_1 \dots z_m 
\text{\qquad if{f} \qquad}
z_1, \dots, z_m \to x
\text{ is derivable in $\Lu + \Rc_M$.}
$$

The proof consists of two directions.

\fbox{$\Leftarrow$}
All types in the sequent $z_1, \dots, z_m \to x$ 
are primitive, therefore its derivation includes only
axioms and
Buszkowski's rules, but not original rules of $\Lu$
($(\to\SL)$, $(\to\BS)$, $(\SL\to)$, $(\BS\to)$).

Since $\Be$, $\Bf$, and $\TQ$ (for all $y \in N
\cup \Sigma$, including $w$) do not appear
in the succedents of goal sequents in 
Buszkowski's rules from $\Rc_M$,
the only possible situation when 
$\Be$, $\Bf$, or $\TQ$ actually 
appears in the succedent is the axiom. Hence rules
$(1_\PP)$--$(5_\PP)$ can be rewritten in a simpler way
(rules $(6_\PP)$ and $(7_\PP)$ are not affected by
this simplification):

\begin{center}
\begin{tabular}{c@{\qquad}c@{\qquad}c@{\qquad}}
$
\infer[(1'_\PP)]
{\Be, \Phi \to \Ba}{\Phi \to x}
$
&
$
\infer[(2'_\PP)]
{\TQ, \Phi \to \Ba}{\Phi, y \to \Ba}
$
&
$
\infer[(3'_\PP)]
{\TW, \Phi \to \Bb}{\Phi, v_1, v_2 \to \Ba}
$
\\[7pt]
$
\infer[(4'_\PP)]
{\TQ, \Phi \to \Bb}{\Phi, y \to \Bb}
$
&
$
\infer[(5'_\PP)]
{\Bf, \Phi \to \Bc}{\Phi, \Be \to \Bb}
$
&
$
\infer[(6_\PP)]
{\Delta_1, \Delta_2 \to \Bc}
{\Delta_1 \to y  & \Delta_2, \TQ \to \Bc}
$
\\[7pt]
$
\infer[(7_\PP)]
{\Phi \to x}
{\Phi, \Bf \to \Bc}
$
\end{tabular}
\end{center}

Proceed by induction on the cut-free derivation.
The sequent $z_1, \dots, z_m \to x$
 could either be an axiom
(and then $n = 1$, $z_1 = x$, and trivially 
$x \Rightarrow^*_G x$) or be derived by one of
the Buszkowski's rules. Since $x \in N \cup \Sigma$,
the only possible rules are $(\EASY)$ and $(7_\PP)$.

If $z_1, \dots, z_m \to x$ is derived using 
$(\EASY)$:
$$
\infer[(\EASY),]
{\strut z_1, \dots, z_k, z_{k+1}, \dots z_m \to x}
{z_1, \dots, z_k \to v_1 & z_{k+1}, \dots, z_m \to v_2}
$$
 then we have $z_1, \dots, z_k \to v_1$,
$z_{k+1}, \dots, z_m \to v_2$, and 
$(x \Rightarrow v_1 v_2) \in P$. By induction
hypothesis, $v_1 \Rightarrow^*_G z_1 \dots z_k$
and $v_2 \Rightarrow^*_G z_k \dots z_n$, therefore
we get $x \Rightarrow_G v_1 v_2 \Rightarrow^*_G
z_1 \dots z_k z_{k+1} \dots z_m$.

If the last rule in the derivation is $(7_\PP)$, 
then we get $z_1, \dots, z_m, \Bf \to \Bc$. Trace
the type in the succedent. Since the antecedent doesn't
contain $\Bc$, $\Bb$, or $\Ba$, sequents with these types
in the succedent could not appear as axioms, and
the only ways to derive such sequents are represented
by the following schema (the arrows go from goal
to premises):

$$
\xymatrix{
\ar[r] &
\ar@(ul,ur)^{(6_\PP)} \Bc \ar[r]_{(5'_\PP)} &
\ar@(ul,ur)^{(4'_\PP)} \Bb \ar[r]_{(3'_\PP)} &
\ar@(ul,ur)^{(2'_\PP)} \Ba \ar[r]_{(1'_\PP)} &
x
}
$$

Therefore, the sequent 
$z_1, \dots, z_m, \Bf \to \Bc$ is derived 
in the following way: several 
(possibly zero) applications of
$(6_\PP)$, then $(5'_\PP)$, then several
$(4'_\PP)$, then several $(2'_\PP)$, then $(1'_\PP)$.
Finally, on top of this last $(1'_\PP)$ rule we
again get a sequent with $x$ in the succedent.
The whole derivation has the following form.
Here $^*$ means several consecutive applications of
the same rule, and
$\Delta_1, \dots, \Delta_n = z_1, \dots, z_m$.

{\small
$$
\infer[(7_\PP)]
{\Delta_1, \dots, \Delta_n \to x}
{
\infer[(6_\PP)^*]{
\Delta_1, \dots, \Delta_n, \Bf \to \Bc}
{\Delta_1 \to y_1 & \dots 
& \Delta_k \to w 
 & \dots
& 
\Delta_n \to y_n &
\infer[(5'_\PP)]{\Bf, \TQ_1, \dots, \TQ_{k-1}, \TW, \TQ_{k+1}, \dots, \TQ_n \to \Bc}{
\infer[(4'_\PP)^*]{\TQ_1, \dots, \TQ_{k-1}, \TW, \TQ_{k+1}, \dots, \TQ_n, \Be \to \Bb}{
\infer[(3'_\PP)]{\TW, \TQ_{k+1}, \dots, \TQ_n, \Be,
y_1, \dots, y_{k-1} \to \Bb}
{
\infer[(2'_\PP)^*]{\TQ_{k+1}, \dots, \TQ_n, \Be, y_1, \dots, y_{k-1}, v_1, v_2 \to \Ba}
{
\infer[(1'_\PP)]{\Be, y_1, \dots, y_{k-1}, v_1, v_2, 
y_{k+1}, \dots, y_n \to \Ba}
{y_1, \dots, y_{k-1}, v_1, v_2, y_{k+1}, \dots, y_n \to x}
}}}}}}
$$
}

Here rule $(1'_\PP)$ introduces $\Be$, $(2'_\PP)$ moves $\Be$
to the left and marks $y_i$ as $\TQ_i$, $(3'_\PP)$ actually applies
the production $(v_1 v_2 \Rightarrow w)$, which is possible, since
$v_1, v_2$ is now on the edge of the antecedent, $(4'_\PP)$ continues
the movement, and finally $(5'_\PP)$, $(6_\PP)$, and $(7_\PP)$ move the
letters backwards, unmark them and return the antecedent to $x$.
 
By induction hypothesis, $y_1 \Rightarrow^*_G \Delta_1$,
\dots, $y_{k-1} \Rightarrow^*_G \Delta_{k-1}$,
$w \Rightarrow^*_G \Delta_k$,
$y_{k+1} \Rightarrow^*_G \Delta_{k+1}$,
\dots,
$y_n \Rightarrow^*_G \Delta_n$, and
$x \Rightarrow^*_G y_1 \dots y_{k-1} v_1 v_2 y_{k+1}
\dots y_n$. By application of 
$v_1 v_2 \Rightarrow w$ we get
$x \Rightarrow^*_G \Delta_1 \dots \Delta_n$.

We notice that the first type of productions of the binary grammar
is handled much easier than the second one. This is due to the fact
that in the first case we simulate standard context-free derivation,
while in the second case the production is not context-free and even
not context-sensitive.

\fbox{$\Rightarrow$}
Proceed by induction on $\Rightarrow^*_G$. For
the base case ($x \Rightarrow^*_G x$) the corresponding
sequent ($x \to x$) is an axiom. 

If the last production
is $w \Rightarrow v_1 v_2$:
$$x \Rightarrow^*_G z_1 \dots z_{k-1} w z_{k+1} \dots
z_m \Rightarrow_G z_1 \dots z_{k-1} v_1 v_2 z_{k+1} \dots
z_m,$$ then by induction hypothesis
$z_1, \dots, z_{k-1},  w, z_{k+1}, \dots, z_m \to x$ 
is derivable in $\Lu + \Rc_M$. Also
$v_1, v_2 \to w$ is derivable by $(\BUSZK_1)$, and
by $(\CUT)$ we obtain $z_1, \dots, z_{k-1},\linebreak v_1, v_2, z_{k+1},
\dots, z_m \to x$. The cut rule is admissible
in $\Lu + \Rc$ by Theorem~\ref{Th:Buszkcutelim}.

For the $v_1 v_2 \Rightarrow w$ case, {\em i.e.,} the last
production is applied like this:
$$x \Rightarrow^*_G z_1 \dots
z_{k-1} v_1 v_2 z_{k+1} \dots z_m \Rightarrow_G
z_1 \dots z_{k-1} w z_{k+1} \dots z_m,$$ the derivation is
as follows (here $\PP = \langle
(v_1 v_2 \Rightarrow w), x \rangle$):

$$
\small
\infer[(7_\PP)]
{\strut z_1, \dots, z_{k-1}, w, z_{k+1}, \dots, z_m \to x}
{\infer[(6_\PP)^*]
{z_1, \dots, z_{k-1}, w, z_{k+1}, \dots, z_m, \Bf \to \Bc}
{z_1 \to z_1 & \dots & w \to w & \dots & z_m \to z_m &
\infer[(5'_\PP)]{
\Bf, \TZ_1, \dots, \TZ_{k-1}, \TW, \TZ_{k+1}, \dots,
\TZ_m \to \Bc}
{
\infer[(4'_\PP)^*]
{\TZ_1, \dots, \TZ_{k-1}, \TW, \TZ_{k+1}, \dots,
\TZ_m, \Be \to \Bb}
{
\infer[(3'_\PP)]
{\TW, \TZ_{k+1}, \dots, \TZ_m, \Be, z_1, \dots, z_{k-1}
\to \Bb}
{
\infer[(2'_\PP)^*]
{\TZ_{k+1}, \dots, \TZ_m, \Be, z_1, \dots, z_{k-1},
v_1, v_2 \to \Ba}
{
\infer[(1'_\PP)]
{\Be, z_1, \dots, z_{k-1}, v_1, v_2, z_{k+1}, \dots,
z_m \to \Ba}
{z_1, \dots, z_{k-1}, v_1, v_2, z_{k+1}, \dots, z_m 
\to x}
}}}}}}
$$

The sequent on the top is derivable by
inductive hypothesis. 
\end{proof}

Since there exist undecidable recursively
enumerable languages, Theorem~\ref{Th:Buszkrecursive} now yields the
following result:

\begin{theorem}\label{Th:Buszkundec}
There exists a finite set of Buszkowski's rules $\Rc_0$ such that the derivability
problem for $\Lu + \Rc_0$ is undecidable.
\end{theorem}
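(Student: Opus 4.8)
The plan is to obtain this as an immediate corollary of Theorem~\ref{Th:Buszkrecursive}, by instantiating it on a fixed undecidable recursively enumerable language. First I would invoke the classical fact that there is a recursively enumerable set that is not recursive, and fix a concrete witness: encoding such a set as a language over a finite alphabet $\Sigma$ (for instance by unary or binary notation of its elements) and discarding the empty word, we get a recursively enumerable language $M_0 \subseteq \Sigma^+$ whose membership problem is undecidable. Since $\Var$ is countably infinite while $\Sigma$ is finite, we may harmlessly assume $\Sigma \subset \Var$ with $\Var$ still containing infinitely many variables outside $\Sigma$, so all hypotheses of Theorem~\ref{Th:Buszkrecursive} hold for $M_0$.

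Next I would apply Theorem~\ref{Th:Buszkrecursive} to $M_0$, obtaining a \emph{finite} set $\Rc_0 := \Rc_{M_0}$ of Buszkowski's rules and a primitive type $s$ such that, for every word $a_1 \dots a_n$ over $\Sigma$,
$$
a_1 \dots a_n \in M_0 \text{\qquad iff\qquad } a_1, \dots, a_n \to s \text{ is derivable in } \Lu + \Rc_0 .
$$
The map $a_1 \dots a_n \mapsto (a_1, \dots, a_n \to s)$ is obviously computable, so it is a many--one reduction of membership in $M_0$ to the derivability problem for $\Lu + \Rc_0$ (even to the subproblem consisting of sequents of this particular shape). Consequently, were the derivability problem for $\Lu + \Rc_0$ decidable, composing the decision procedure with this reduction would decide membership in $M_0$, contradicting the choice of $M_0$. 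Hence the derivability problem for $\Lu + \Rc_0$ is undecidable, which is the claim.

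I do not expect a genuine obstacle here: all the combinatorial work has been absorbed into Theorem~\ref{Th:Buszkrecursive}, and what remains is a textbook reduction argument. The only point deserving a remark is the hypothesis in Theorem~\ref{Th:Buszkrecursive} that the language contain no empty word; this costs nothing, since intersecting any undecidable recursively enumerable language with $\Sigma^+$ removes at most the single word $\varepsilon$ and therefore preserves undecidability. Thus, strictly speaking, the ``hardest'' part is merely checking that a standard undecidable r.e.\ set can be presented in the form required by the previous theorem.
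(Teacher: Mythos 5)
Your proposal is correct and follows exactly the route the paper takes: the paper derives this theorem in one line from Theorem~\ref{Th:Buszkrecursive} together with the existence of an undecidable recursively enumerable language, which is precisely your reduction. Your extra remark about discarding the empty word to meet the hypothesis of Theorem~\ref{Th:Buszkrecursive} is a sensible (and harmless) piece of bookkeeping that the paper leaves implicit.
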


Note that the $\BS$ connective is not used in the
construction, so we've actually obtained undecidability
for $\Luu + \Rc_0$.

\section{Undecidability of $\CC$}\label{S:undec}

We prove undecidability of $\CC$ 
by encoding $\Lu+\Rc$ derivations in this calculus. In order
to do that, we first prove a technical proposition.

If $\Rc$ is a set of Buszkowski's rules, let
$$
\Gc_\Rc = \left\{ (r \SL q) \SL p \mid
\tfrac{\Pi_1 \to p \quad \Pi_2 \to q}{\Pi_1, \Pi_2 \to r} 
\in \Rc \right\} \cup
\left\{ r \SL (p \SL q) \mid 
\tfrac{\Pi, q \to p}{\Pi \to r} 
\in \Rc \right\}.
$$

If $\Bx = \{ B_1, \dots, B_n \}$ is a finite set of 
formulae, let $\GAc{\Bx} = {!}B_1, \dots, {!}B_n$. (The order
of the elements in $\Bx$ doesn't matter, since $\GAc{\Bx}$
will appear in left-hand sides of $\CC$ sequents, and
in $\CC$ we have the $(\PERM_{1,2})$ rules.)

\begin{theorem}\label{Th:deduction}
$\Lu + \Rc \vdash \Pi \to A$ if and only if there exists
$\Bx \subseteq \Gc_\Rc$ such that $\CC \vdash 
\GAc{\Bx}, \Pi \to A$.
\end{theorem}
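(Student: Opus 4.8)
The plan is to establish the two directions separately, each by induction on a derivation; in both the crucial point is that every formula of $\Gc_\Rc$ has the shape $(r\SL q)\SL p$ or $r\SL(p\SL q)$ with $p,q,r$ primitive, and in particular is ${!}$-free.

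For the ``only if'' direction I would induct on a cut-free $\Lu+\Rc$-derivation of $\Pi\to A$. For the axiom take $\Bx=\emptyset$. For a rule of $\Lu$, the induction hypotheses supply $\CC$-derivations of $\GAc{\Bx_i},\Pi_i\to A_i$ for the premises; I apply the same rule, then use $(\PERM_1)$/$(\PERM_2)$ to slide the ${!}$-blocks to the front (past $\Delta_1$, past the newly introduced implication, or in front of the principal formula for $(\to\BS)$), set $\Bx=\Bx_1\cup\Bx_2$, and merge the now-duplicated ${!}$-formulas with $(\CONTR)$; no weakening on ${!}$-formulas is needed, because $\Bx$ is literally the union of the sets already available, and $\Bx_1\cup\Bx_2\subseteq\Gc_\Rc$. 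For $(\BUSZK_1)$, with premises $\Pi_1\to p$, $\Pi_2\to q$ and conclusion $\Pi_1,\Pi_2\to r$, I use $(r\SL q)\SL p\in\Gc_\Rc$: two applications of $(\SL\to)$, first against the derivation of $\GAc{\Bx_2},\Pi_2\to q$ and then against that of $\GAc{\Bx_1},\Pi_1\to p$ (both on top of the axiom $r\to r$), produce $(r\SL q)\SL p,\GAc{\Bx_1},\Pi_1,\GAc{\Bx_2},\Pi_2\to r$; then $({!}\to)$ boxes the leading formula, $(\PERM_{1,2})$ collect the ${!}$-formulas and $(\CONTR)$ removes repetitions, and I take $\Bx=\Bx_1\cup\Bx_2\cup\{(r\SL q)\SL p\}$. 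For $(\BUSZK_2)$, from the derivation of $\GAc{\Bx_1},\Pi,q\to p$ one application of $(\to\SL)$ gives $\GAc{\Bx_1},\Pi\to p\SL q$; one $(\SL\to)$ against $r\to r$ using $r\SL(p\SL q)\in\Gc_\Rc$ gives $r\SL(p\SL q),\GAc{\Bx_1},\Pi\to r$; and $({!}\to)$, $(\PERM_{1,2})$, $(\CONTR)$ finish, with $\Bx=\Bx_1\cup\{r\SL(p\SL q)\}$. No cut is used in $\CC$ here.

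For the ``if'' direction, fix a (necessarily cut-free) $\CC$-derivation of $\GAc{\Bx},\Pi\to A$ with $\Bx\subseteq\Gc_\Rc$. The first step is a structural analysis: since $\Pi$, $A$ and each $C\in\Bx$ are ${!}$-free, the subformula property forces every ${!}$-formula occurring in the derivation to be one of the ${!}C$ with $C\in\Gc_\Rc$, occurring as a whole formula and never as a proper subformula; moreover such formulas can occur only in antecedents, since a ${!}$-formula sitting in a succedent would have to stay the succedent all the way down (no $\CC$-rule transforms a premise succedent ${!}$-formula into a subformula of the endsequent), contradicting the ${!}$-free succedent $A$. In particular the rule $(\to{!})$ never occurs. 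The second ingredient is that $\Lu+\Rc\vdash{}\to C$ for every $C\in\Gc_\Rc$ (from the $(\BUSZK_1)$-instance $p,q\to r$, resp.\ the $(\BUSZK_2)$-instance $p\SL q\to r$, via $(\to\SL)$, as in the excerpt). Now I would show, by induction on the cut-free $\CC$-derivation, that for every sequent $\Phi\to B$ occurring in it one has $\Lu+\Rc+(\CUT)\vdash\Phi^\flat\to B$, where $\Phi^\flat$ is $\Phi$ with all ${!}$-formulas deleted: the axiom and the $\Lu$-rules commute with $(\cdot)^\flat$ since their principal formulas are ${!}$-free; $(\PERM_1)$, $(\PERM_2)$ and $(\CONTR)$ become trivial (both premise and conclusion have the same $\flat$-image); and $({!}\to)$, which erases one ${!}C$ from the antecedent, is matched by a $(\CUT)$ against $\to C$. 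Applying this to the endsequent gives $\Lu+\Rc+(\CUT)\vdash\Pi\to A$ (as $(\GAc{\Bx})^\flat$ is empty), and Theorem~\ref{Th:Buszkcutelim} then removes the cut.

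The step I expect to be the main obstacle is the structural analysis in the ``if'' direction: stating precisely and verifying that in any cut-free $\CC$-proof of a sequent whose only ${!}$-material is the block $\GAc{\Bx}$ with $\Bx\subseteq\Gc_\Rc$, every occurrence of ${!}$ in the entire proof is a standalone ${!}C$ ($C\in\Gc_\Rc$) lying in an antecedent. This rests on the subformula property of cut-free $\CC$ together with the fact that formulas of $\Gc_\Rc$ contain no ${!}$ and hence never generate new ${!}$-subformulas; once it is in place, the $(\cdot)^\flat$-translation is routine, the only non-trivial inference being $({!}\to)$, which works precisely because the boxed formula is a theorem of $\Lu+\Rc$.
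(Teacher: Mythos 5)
Your proof is correct, and while the ``only if'' direction coincides with the paper's (same induction on the $\Lu+\Rc$-derivation, same use of $(\SL\to)$, $({!}\to)$, $(\PERM_{1,2})$, $(\CONTR)$ with $\Bx$ taken as a union of the premises' sets plus the encoding formula), your ``if'' direction takes a genuinely different route. The paper restricts attention to the $\SL$-only fragment, reformulates the derivation inside a sub-calculus that carries the ${!}$-block as a set $\GAc{\Bx}$, proves commutation lemmas to push each $({!}\to)$ up until it sits immediately below the $(\SL\to)$ that decomposes the same formula of $\Gc_\Rc$, and only then reads off the corresponding Buszkowski rule via two cuts. You instead define the erasure $\Phi^\flat$ and simulate the cut-free $\CC$-derivation rule by rule, absorbing each $({!}\to)$ by a single cut against the $\Lu+\Rc$-theorem $\to C$ (which exists precisely because $\Lu$ admits empty antecedents --- this is where your argument quietly uses the absence of Lambek's restriction). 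Your structural analysis (every ${!}$-occurrence in the proof is a standalone ${!}C$ with $C\in\Gc_\Rc$ sitting in an antecedent, so $(\to{!})$ and ${!}$-axioms never occur) does follow from the subformula property of cut-free $\CC$ as you claim, though the succedent claim deserves the slightly more careful top-down induction: a ${!}$-formula could in principle enter a succedent only as the argument $A$ of some $B\SL A$ or $A\BS B$ in an antecedent, and such a compound formula would violate the subformula property. What your approach buys is the elimination of the rule-permutation machinery and a proof that works verbatim with $\BS$ present, i.e., for the full statement of the theorem rather than the $\SL$-fragment the paper's written argument confines itself to; what the paper's normal-form approach buys is an explicit canonical shape for $\CC$-derivations of these sequents, which makes the correspondence with individual Buszkowski rule applications visible. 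Both arguments ultimately lean on the same two external facts: cut admissibility in $\Lu+\Rc$ (Theorem~\ref{Th:Buszkcutelim}) and the subformula property of cut-free $\CC$.
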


In this theorem a finite {\em theory} ($\Rc$) that extends the basic calculus
($\Lu$) gets embedded into the formula (more precisely, the sequent $\Pi \to A$)
being derived. In linear logic this is possible with the help of
the exponential modality (${!}$). However, our version of ${!}$ doesn't 
enjoy the weakening rule, therefore we cannot
always take $\Bx = \Gc_\Rc$, as one usually could expect.
Generally, with $\Bx = \Gc_\Rc$ the ``only if'' statement
is false. For example, ${!}(r \SL (p \SL q)), s \to s$ is not
derivable in $\CC$, but $s \to s$ is indeed derivable in $\Lu + \Rc$
for any $\Rc$. This happens because this particular Buszkowski's rule,
encoded by $r \SL (p \SL q)$, is not {\em relevant} to $s \to s$.

\begin{proof}

\fbox{$\Rightarrow$}
Proceed by induction.

 If $\Pi \to A$ is an axiom ($A \to A$), just take $\Bx = \varnothing$.

If $A = B \SL C$, and $\Pi \to A$ is obtained using
the $(\to\SL)$ rule from
$\Pi, C \to B$, then take the same $\Bx$ and apply
the same rule:
$$
\infer{\GAc{\Bx}, \Pi \to B \SL C}
{\GAc{\Bx}, \Pi, C \to B}
$$

If $\Pi = \Phi_1, B \SL C, \Psi, \Phi_2$, and
$\Pi \to A$ is obtained by $(\SL\to)$ from $\Psi \to C$
and $\Phi_1, B, \Phi_2 \to A$, then by induction
hypothesis $\CC \vdash \GAc{\Bx_1}, \Psi \to C$
and $\CC \vdash \GAc{\Bx_2}, \Phi_1, B, \Phi_2
\to A$ for some $\Bx_1, \Bx_2 \subseteq \Gc_\Ac$. Let
$\Bx = \Bx_1 \cup \Bx_2$. Then for
$\GAc{\Bx}, \Pi \to A$ we have the following
derivation in $\CC$, where ${}^*$ means several applications
of the rules in any order.
$$
\infer[(\CONTR,\PERM)^*]
{\GAc{\Bx_1 \cup \Bx_2},
\Phi_1, B \SL C, \Psi, \Phi_2 \to A}
{
\infer[(\SL\to)]{\GAc{\Bx_2}, \Phi_1, B \SL C, 
\GAc{\Bx_1}, \Psi, \Phi_2 \to A}
{\GAc{\Bx_1}, \Psi \to C &
\GAc{\Bx_2}, \Phi_1, B, \Phi_2 \to A}
}
$$

Finally, $\Pi \to A$ can be obtained by application of
Buszkowski's rules $(\BUSZK_1)$ or $(\BUSZK_2)$. In the
first case, $A = r$, $\Pi = \Pi_1, \Pi_2$,
and both $\Pi_1 \to p$ and $\Pi_2 \to q$ are
derivable in $\Lu + \Rc$. Thus by induction
hypothesis we get $\CC \vdash \GAc{\Bx_1},
\Pi_1 \to p$ and $\CC \vdash \GAc{\Bx_2},
\Pi_2 \to q$ for some $\Bx_1, \Bx_2 \subseteq \Gc_\Rc$.
Moreover, $(r \SL q) \SL p \in \Gc_\Rc$. Now take
$\Bx = \Bx_1 \cup \Bx_2 \cup \{ (r \SL q) \SL p \}$
and enjoy the following derivation for
$\GAc{\Bx}, \Pi_1, \Pi_2 \to r$ in $\CC$:

$$
\infer[(\CONTR,\PERM)^*]
{\GAc{\Bx}, \Pi_1, \Pi_2 \to r}
{
\infer[({!}\to)]{
{!}((r \SL q) \SL p), \GAc{\Bx_1},
\Pi_1, \GAc{\Bx_2}, \Pi_2 \to r
}
{
\infer[(\SL\to)]
{(r \SL q) \SL p, \GAc{\Bx_1},
\Pi_1, \GAc{\Bx_2}, \Pi_2 \to r}
{\GAc{\Bx_1}, \Pi_1 \to p &
\infer[(\SL\to)]
{r \SL q, \GAc{\Bx_2}, \Pi_2 \to r}
{\GAc{\Bx_2}, \Pi_2 \to q & r \to r}}}
}
$$

In the $(\BUSZK_2)$ case, $A = r$, and we have
$\GAc{\Bx'}, \Pi, q \to p$ in the
induction hypothesis for some $\Bx' \subseteq
\Gc_\Rc$. Let $\Bx = \Bx' \cup \{ r \SL (p \SL q) \}$
(recall that $r \SL (p \SL q) \in \Gc_\Rc$), and proceed
like this:
$$
\infer[(\CONTR,\PERM)^*]{\GAc{\Bx}, \Pi \to r}
{
\infer[({!}\to)]
{{!}(r \SL (p \SL q)), \GAc{\Bx'}, \Pi \to r}
{\infer[(\SL\to)]{r \SL (p \SL q), \GAc{\Bx'}, \Pi \to r}
{
\infer[(\to\SL)]
{\GAc{\Bx'}, \Pi \to p \SL q}
{\GAc{\Bx'}, \Pi, q \to p}
& r \to r}
}
}
$$

\fbox{$\Leftarrow$}
Recall that if $\Bx = \{ B_1, \dots, B_n \}$ is a finite
set of formulae, then $\GAc{\Bx} = {!}B_1, \dots, {!}B_n$
(as stated above, the
order of the elements in $\Bx$ doesn't matter due to the $(\PERM_{1,2})$ rules).
For deriving sequents of the form $\GAc{\Bx}, \Pi \to C$,
where $\Pi$, $C$, and $\Bx$ do not contain ${!}$ and $\BS$,
 one can use a simpler calculus than
$\CC$:
$$
\infer{p \to p}{}
\qquad
\infer[(\to\SL)]{\GAc{\Bx}, \Pi \to A \SL B}
{\GAc{\Bx}, \Pi, B \to A}
$$
$$
\infer[(\SL\to)]{
\GAc{\Bx_1 \cup \Bx_2}, \Delta_1, A \SL B, 
\Pi, \Delta_2 \to C}
{\GAc{\Bx_1}, \Pi \to B &
\GAc{\Bx_2}, \Delta_1, A, \Delta_2 \to C}
\qquad
\infer[({!}\to)]{\GAc{\Bx \cup \{ A \} },
\Delta_1, \Delta_2 \to C}{\GAc{\Bx},
\Delta_1, A, \Delta_2 \to C}
$$

Moreover, the $({!}\to)$ rule is interchangeable
with the others in the following ways:

$$
\infer[({!}\to)]{\GAc{\Bx \cup \{C\}}, \Delta_1, \Delta_2 \to A \SL B}
{\infer[(\to\SL)]
{\GAc{\Bx}, \Delta_1, C, \Delta_2 \to A \SL B}
{\GAc{\Bx}, \Delta_1, C, \Delta_2, B \to A}
}
\qquad
\text{\raisebox{1.2em}{$\leadsto$}}
\qquad
\infer[(\to\SL)]{\GAc{\Bx \cup \{C\}},
 \Delta_1, \Delta_2 \to A \SL B}
{\infer[({!}\to)]
{\GAc{\Bx \cup \{C\}}, \Delta_1, \Delta_2, B \to A}
{\GAc{\Bx}, \Delta_1, C, \Delta_2, B \to A}
}
$$

\vskip 10pt

$$
\infer[({!}\to)]{\GAc{\Bx_1 \cup \Bx_2 \cup \{D\}}, \Delta_1, A \SL B, \Pi, \Delta'_2, \Delta''_2 \to C}
{\infer[(\SL\to)]
{\GAc{\Bx_1 \cup \Bx_2}, \Delta_1, A \SL B, \Pi, \Delta'_2, D, \Delta''_2 \to C}
{\GAc{\Bx_1}, \Pi \to B & \GAc{\Bx_2}, \Delta_1, A, \Delta'_2, D, \Delta''_2 \to C}
}
$$
\centerline{\turnbox{270}{$\leadsto$}}
\vskip 10pt
$$
\infer[(\SL\to)]{\GAc{\Bx_1 \cup \Bx_2 \cup \{D\}}, \Delta_1, A \SL B, \Pi, \Delta'_2, \Delta''_2 \to C}
{\GAc{\Bx_1}, \Pi \to B & \infer[({!}\to)]
{\GAc{\Bx_2 \cup \{D\}}, \Delta_1, A, \Delta'_2, \Delta''_2 \to C}
{\GAc{\Bx_2}, \Delta_1, A, \Delta'_2, D, \Delta''_2 \to C}
}
$$
And the same, if $D$ appears inside $\Delta_1$ or $\Pi$. Finally, consecutive
applications of $({!}\to)$ are always interchangeable.

After applying these transformations, we achieve a derivation where
$({!}\to)$ is applied immediately after applying $(\SL\to)$ with the same
active type (the other case, when it is applied after the axiom to $p$, is
impossible, since $\Bx$ is always a subset of $\Gc_\Rc$, and
the latter doesn't contain sole variables).
In other words, applications of $({!}\to)$ appear only in the following
two situations:
$$
\infer[({!}\to)]{\GAc{\Bx_1 \cup \Bx_2 \cup \{(r \SL q) \SL p)\}},
\Delta_1, \Pi, \Delta_2 \to A}
{
\infer[(\SL\to)]{\GAc{\Bx_1 \cup \Bx_2}, \Delta_1, (r \SL q) \SL p, \Pi, \Delta_2 \to A}
{\GAc{\Bx_1}, \Pi \to p & \GAc{\Bx_2}, \Delta_1, r \SL q, \Delta_2 \to A}
}
$$
and
$$
\infer[({!}\to)]{\GAc{\Bx_1 \cup \Bx_2 \cup \{ r \SL (p \SL q)\}}, \Delta_1, \Pi, \Delta_2 \to A}
{\infer[(\SL\to)]{\GAc{\Bx_1 \cup \Bx_2}, \Delta_1, r \SL (p \SL q), \Pi, \Delta_2 \to A}
{\GAc{\Bx_1}, \Pi \to p \SL q & \GAc{\Bx_2}, \Delta_1, r, \Delta_2 \to A}
}
$$

Now we prove the statement
$\CC \vdash \GAc{\Bx}, \Pi \to A$ 
(where $\Bx \subseteq \Gc_\Rc$) $\Rightarrow$
$\Lu + \Rc \vdash \Pi \to A$ by induction on the
above canonical derivation. For the case of axiom
or applications of rules $(\to\SL)$ and $(\SL\to)$ we
just apply the same rules in $\Lu + \Rc$, so the only
interesting case is $({!}\to)$. Consider the two
possible situations.

In the $(r \SL q) \SL p$ case, by induction hypothesis we get 
$\Lu + \Rc \vdash \Pi \to p$ and $\Lu + \Rc \vdash \Delta_1, r \SL q, \Delta_2 \to A$,
and then we develop the following derivation
in $\Lu+\Rc$ (recall that $(\CUT)$ is admissible there):
$$
\infer[(\CUT)]{\strut\Delta_1, \Pi, \Delta_2 \to A}
{\Pi \to p & 
\infer[(\CUT)]{\strut \Delta_1, p, \Delta_2 \to A}
{\strut \infer[(\to\SL)]{p \to r \SL q}{p,q\to r} &
\Delta_1, r \SL q, \Delta_2 \to A}
}
$$

In the case of $r \SL (p \SL q)$, the derivation looks like this:
$$
\infer[(\CUT)]{\strut \Delta_1, \Pi, \Delta_2 \to A}
{\Pi \to p \SL q & 
\infer[(\CUT)]{\strut \Delta_1, p \SL q, \Delta_2 \to A}
{
p \SL q \to r
& \Delta_1, r, \Delta_2 \to A}}
$$

This completes the proof of Theorem~\ref{Th:deduction}. 
\end{proof}

Now we can return to our main claim.

\begin{proof}[Proof of Theorem~\ref{Th:main}]
Take $\Rc_0$ from Theorem~\ref{Th:Buszkundec} and
suppose that $\CC$ is decidable.
Then we can present an
algorithm that solves the derivability problem for
$\Lu + \Rc_0$. Namely, for a sequent $\Pi \to A$
we search through all subsets
$\Bx \subseteq \Gc_\Rc$ (and there is a finite number
of them) and test derivability of $\GAc{\Bx}, \Pi \to A$
in $\CC$. By Theorem~\ref{Th:deduction}, $\Pi \to A$ is
derivable in $\Lu + \Rc_0$ if and only if at least one
of these tests succeeds. This contradicts Theorem~\ref{Th:Buszkundec}.
Therefore $\CC$ is undecidable.

Since we never used $\BS$ in the construction,
we get undecidability for $\CCu$. 
\end{proof}

This proof of Theorem~1 is in the spirit of our previous work~\cite{KanKuzSce}.
The significant difference between this paper and~\cite{KanKuzSce}
is that here the modality does not satisfy the weakening rule and the system
$\CC$ doesn't obey any version of Lambek's restriction ({\em i.e.,} the antecedents
are allowed to be empty).
Due to the lack of the weakening rule, in Theorem~\ref{Th:deduction} 
it is not sufficient to check derivability only for $\Bx = \Gc_\Rc$,
and therefore Theorem~\ref{Th:deduction} is formulated in the relevant
logic style.
We also had to open up and reassemble Buszkowski's proof from~\cite{Buszko1982} and~\cite{Buszko2005}
to make it work without Lambek's restriction (in $\Lu$). 

\section{A Decidable Fragment of $\CC$}\label{S:decidable}

Undecidability of $\CC$ is somewhat unfortunate, because this
calculus is liguistically motivated (see Section~\ref{S:ling}).
However, in our examples ${!}$ was applied only to primitive
types ($np$). If we consider only sequents with this restriction,
the situation is different: the derivability problem becomes
decidable. Moreover, it belongs to NP.

Let's call the {\em size} of a formula $A$ (denoted by $|A|$) the total number of variable and
connective occurrences in $A$. More formally, $|A|$ is defined recursively:
$|p| = 1$ for $p \in \Var$, $|A \BS B| = |B \SL A| = |A| + |B| + 1$,
$|{!}A| = |A| + 1$. The size of a sequent $A_1, \dots, A_n \to B$ is
$|A_1| + \ldots + |A_n| + |B|$.

In the pure Lambek calculus, the size of any derivation
is necessarily bounded by the size of the goal sequent.
In our case, a sequent could have derivations of arbitrary
size due to uncontrolled application of permutation rules:
two consecutive applications of $(\PERM_1)$ and $(\PERM_2)$ (with
the same formula at the same places) do nothing with the sequent, but increase
the derivation size. Nevertheless, the following lemma shows that
every sequent has a derivation of quadratic size.

\begin{lemma}
If the sequent $\Pi \to C$ is derivable in $\CC$ and 
${!}$ in this sequent is applied only to variables,
then this
sequent has a derivation of size less than $12n^2 + 3n$, where
$n$ is the size of $\Pi \to C$.
\end{lemma}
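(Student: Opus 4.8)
The plan is to take an arbitrary derivation of $\Pi \to C$ and surgically remove all redundant permutation steps, while also bounding how many applications of each kind of rule can occur. First I would normalize the derivation so that $(\PERM_1)$ and $(\PERM_2)$ are pushed toward the leaves, grouping together all the ``transport'' of a given ${!}p$ into a single contiguous block. The key observation is that since ${!}$ is applied only to variables, the only formulae of the form ${!}A$ that can ever appear anywhere in a cut-free derivation of $\Pi \to C$ are ${!}p$ for variables $p$ occurring in $\Pi \to C$ (subformula property); call the multiset of such ${!}p$-occurrences in the antecedent $\Pi$ the ``mobile'' formulae. Every $(\PERM_{1,2})$ step moves one mobile formula past one adjacent formula. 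A derivation is \emph{wasteful} if somewhere a mobile formula is moved right and then (without any intervening rule touching the formulae between) moved back, or more generally if the net displacement achieved by a block of permutations is smaller than the block length; I would argue that any wasteful derivation can be shortened, so a minimal-size derivation is not wasteful.

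Next I would bound the non-permutation rules. In a minimal derivation there are no two consecutive $({!}\to)$ on formulae that could be done in the other order and no $(\CONTR)$ immediately undone, so each of $(\SL\to)$, $(\to\SL)$, $(\BS\to)$, $(\to\BS)$, $({!}\to)$ is applied at most $n$ times (each such application consumes or matches a connective occurrence tracked by the size, exactly as in the pure Lambek calculus), giving at most $O(n)$ logical steps. The delicate rule is $(\CONTR)$: each contraction increases the number of copies of some ${!}p$ in the antecedent. But a copy of ${!}p$ is only ever useful if it is eventually consumed by a $({!}\to)$ (or left in the final antecedent, which cannot happen since the goal antecedent $\Pi$ is fixed and, because ${!}$ is only on variables, one checks ${!}p$ cannot survive to the conclusion unless it was already there). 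Hence in a minimal derivation the total number of ${!}p$-copies ever created is at most the number of $({!}\to)$ applications plus $|\Pi|$, i.e.\ $O(n)$, so $(\CONTR)$ is applied $O(n)$ times.

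Finally I combine these counts. There are $O(n)$ non-permutation rule applications, and between any two of them the antecedent has length $O(n)$ (it is the goal length plus the copies created so far, all $O(n)$); a single contiguous block of permutations within a fixed antecedent of length $O(n)$ that is not wasteful has length $O(n)$, since it just rearranges $O(n)$ formulae and each non-wasteful block realizes a rearrangement with displacement-cost equal to block length, bounded by $O(n^2)$, but the relevant per-block bound is the number of inversions it can usefully produce before the next logical step, which is $O(n)$. So the whole derivation has $O(n)\cdot O(n)=O(n^2)$ rule applications, each with a sequent of size $O(n)$, for total size $O(n^2)$; I then just have to track the constants carefully to reach the explicit bound $12n^2+3n$. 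The main obstacle is the bookkeeping for $(\CONTR)$ and the permutation blocks simultaneously: I must make sure that the copies created by contraction, and the permutations needed to bring two copies of ${!}p$ adjacent before contracting (or to move a copy to where a $({!}\to)$ will absorb it), are all accounted for inside the $O(n^2)$ budget and do not themselves trigger a cascade of further permutations. Making the notion of ``wasteful'' precise enough to be genuinely eliminable by a local rewrite, yet coarse enough to yield the clean quadratic bound, is where the real work lies.
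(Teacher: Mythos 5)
Your overall architecture matches the paper's (bound the logical rules by the number of connectives, bound $(\CONTR)$ by an $O(n)$ occurrence count, then bound the permutations block by block), but there is a genuine gap in the permutation count, and it is precisely the step you flag as ``where the real work lies.'' You read $(\PERM_{1,2})$ as moving one ${!}p$ past \emph{one adjacent formula}, but the rules as stated move ${!}A$ past an arbitrary contiguous block $\Gamma$ in a \emph{single} application. Under your adjacent-swap reading, a maximal block of permutations that realizes a genuinely needed rearrangement of $k$ mobile formulas (e.g.\ to position the antecedent for the next $(\SL\to)$ split) may require $\Theta(k^2)$ steps --- the number of inversions of the permutation --- and nothing forces that number to be $O(n)$; your assertion that ``the number of inversions it can usefully produce before the next logical step is $O(n)$'' is unjustified and false in general. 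With $k$ up to $3n$ and up to $4n$ blocks, your count only yields $O(n^3)$, not the claimed quadratic bound. The paper's argument avoids this entirely: since one application of $(\PERM_{1,2})$ transports ${!}A_i$ any distance, each maximal block of consecutive permutations (which merely permutes the at most $3n$ banged formulas among the fixed remaining ones) can be \emph{replaced} by at most one application per ${!}A_i$, hence fewer than $3n$ applications per block, and fewer than $4n$ blocks gives the $12n^2$ term. This replacement is a direct local surgery on the given derivation, so no minimality or ``non-wasteful'' normal form needs to be defined or proved attainable.

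Two smaller points. Your bound on $(\CONTR)$ via ``every created copy of ${!}p$ must be consumed by $({!}\to)$'' is not quite exhaustive (a copy can also terminate in an axiom leaf ${!}p \to {!}p$, or travel into the left premise of a division rule); the paper instead traces the at most $2n$ variable occurrences introduced by the at most $n$ axiom leaves downward and observes that each $(\CONTR)$ merges at least two of them, giving fewer than $2n$ contractions cleanly. And your appeal to minimal derivations being ``non-wasteful'' is never cashed out; once the permutation rules are read correctly, it is also unnecessary.
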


Recall that $(\CUT)$ is not included in the system,
all derivations are cut-free.

\begin{proof}
We represent the derivation of $\Pi \to C$ as a tree. The leaves of the
tree are instances of axioms, and the inner nodes correspond to applications
of rules. Rules $(\SL\to)$ and $(\BS\to)$ form {\em branching points} of the tree.
The number of leaves is equal to the number of branching points plus one.

Let's call $(\PERM_{1,2})$ and $(\CONTR)$ {\em structural} rules;
other rules are {\em logical} ones.

Each logical rule introduces exactly
one connective into the goal sequent $\Pi \to C$. The key note here is the fact
that, since only variables can appear under ${!}$, the contraction rule $(\CONTR)$ cannot
merge two connectives. Therefore, since the total number of connectives is less
than $n$, the number of logical rule applications is also less than $n$.

Each branching point corresponds to an application of a logical rule, whence the
number of branching points is also less than $n$. Therefore, in the tree there are
no more than $n$ axiom leaves, and each axiom introduces two variable occurrences.
Let's trace these occurrences down the tree. Each occurrence either traces to
an occurrence in the goal sequent, or disappears (gets merged with another occurrence)
in an application of $(\CONTR)$. Thus, the number of $(\CONTR)$ applications is
less than the total number of variable occurrences in axiom leaves, and, therefore,
less then $2n$.

Finally, we limit the number of $(\PERM_{1,2})$ applications. As said above,
a block of consecutive applications of $(\PERM_{1,2})$ can include an arbitrarily
large number of $(\PERM_{1,2})$ applications. However, we can always reduce it.
Each block of consecutive permutations has the following form:
$$
\infer[(\PERM_{1,2})^*,]
{\Delta'_1, {!}A_{i_1}, \Delta'_2, {!}A_{i_2}, \Delta'_3, \dots, \Delta'_k, {!}A_{i_k},
\Delta'_{k+1} \to B}
{\Delta_1, {!}A_1, \Delta_2, {!}A_2, \Delta_3, \dots, \Delta_k, {!}A_k, \Delta_{k+1} \to B}
$$
where the sequences 
$\Delta_1, \dots, \Delta_{k+1}$ and $\Delta'_1, \dots, \Delta'_{k+1}$ coincide
and $\{ i_1, \dots, i_k \} = \{1, \dots, k\}$.

The number of formulae in the left-hand side of the sequent here is bounded
by $3n$ (it was less than $n$ in the goal sequent $\Pi \to C$, and, in the
worst case, it was increased by less than $2n$ applications of $(\CONTR)$).
Therefore, $k < 3n$.
Now we replace this block with a block of $k$ permutations: each ${!}A_i$ is 
moved to its place by one permutation. Thus, in each block we have less than
$3n$ permutations. 

Each $(\PERM_{1,2})$ block is preceded by an application of a rule different
from $(\PERM_{1,2})$ or an axiom leaf. Thus
the number of such blocks is bounded by $4n$ ($n$ for logical rules,
$2n$ for contractions, $n$ for axioms). 

Therefore, the number of $(\PERM_{1,2})$ applications
is less than $12n^2$, and the total size of the derivation is less than
$12n^2 + 3n$. 
\end{proof}

This lemma yields the following decidability result:

\begin{theorem}\label{Th:NP}
The derivability problem in $\CC$ for sequents
in which ${!}$ is applied only to variables
is decidable and belongs to
the $\mathrm{NP}\!$ class
(i.e., can be resolved by
a nondeterministic polynomial algorithm).
\end{theorem}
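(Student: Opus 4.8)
The plan is to derive Theorem~\ref{Th:NP} directly from the preceding Lemma by turning the bounded-derivation result into a nondeterministic polynomial-time decision procedure. Given a sequent $\Pi \to C$ of size $n$ in which ${!}$ occurs only in front of variables, the algorithm guesses, step by step, a cut-free derivation tree in $\CC$, starting from the goal and working upwards toward the axioms. By the Lemma, if the sequent is derivable at all, it has a derivation of size less than $12n^2 + 3n$; so it suffices to have the nondeterministic machine guess a derivation and reject if its size exceeds this bound. Each individual sequent appearing in such a derivation has size at most $3n$ in its antecedent plus the fixed succedent, hence is polynomially bounded; a derivation of polynomial size can be written down and checked in polynomial time.

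The key steps, in order, are as follows. First I would observe that, by the Lemma, it is enough to search for derivations whose total size is below $12n^2 + 3n$; in particular every node is a sequent all of whose components are subformulae of the original sequent (this uses the subformula property, valid because $\CC$ is cut-free), so each node can be encoded in $O(n)$ space and the whole derivation in $O(n^2)$ space. Second, I would describe the nondeterministic procedure: maintain a list of ``open'' sequents (initially just $\Pi \to C$); repeatedly pick an open sequent, nondeterministically guess which rule of $\CC$ was applied to obtain it and guess the premise(s), replacing the open sequent by its premises; halt and accept when all open sequents are instances of the axiom $A \to A$; reject if at any point the cumulative size exceeds the bound or a guessed rule application is ill-formed. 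Third, verifying correctness: if the procedure accepts, concatenating the guessed rule applications yields a genuine $\CC$-derivation, so $\Pi \to C$ is derivable; conversely, if $\Pi \to C$ is derivable, the Lemma supplies a derivation within the size bound, which the machine can guess, so some computation branch accepts. Fourth, the complexity bookkeeping: the number of rule applications is $O(n^2)$, each guess (a rule name, a split point of a sequence, a division of the subformula multiset) is over a polynomial-size space and hence takes polynomial time to write down and to check syntactically, so the total running time along any branch is polynomial in $n$; thus the problem lies in NP.

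The only mildly delicate point is making sure the size bound is genuinely respected by the \emph{guessed} derivation rather than merely by \emph{some} derivation: the machine must enforce the global bound $12n^2 + 3n$ as it goes (aborting any branch that exceeds it), and one must check that this does not exclude the good derivation promised by the Lemma --- but that is immediate, since the Lemma produces a derivation already below the bound. A second routine point is that the structural rules $(\PERM_{1,2})$ and $(\CONTR)$, together with $({!}\to)$, only ever act on formulae of the form ${!}p$ with $p$ a variable, so guessing their premises amounts to guessing a position and (for $(\SL\to)$, $(\BS\to)$) a split of the surrounding sequence, all polynomially bounded. I expect the main obstacle --- such as it is --- to be simply the careful accounting that keeps every guessed object and the whole derivation polynomially bounded; the mathematical content is entirely carried by the Lemma, and once its quadratic bound is in hand the NP membership follows by a standard guess-and-check argument.
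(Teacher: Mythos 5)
Your proposal is correct and matches the paper's (implicit) argument: the paper derives Theorem~\ref{Th:NP} directly from the Lemma by exactly this standard guess-and-check reduction, nondeterministically guessing a derivation within the $12n^2+3n$ bound and verifying it in polynomial time. No gaps.
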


\section{Future Work}

Since the Lambek calculus itself is NP-complete~\cite{Pentus2006}\cite{Savateev2009},
we get NP-completeness of $\CC$ in the restricted case,
where ${!}$ can be applied only to variables. On the other hand, it is known that
the derivability problem for the fragment of the pure Lambek calculus 
with only one division operation is
decidable in polynomial time~\cite{Savateev2008}\cite{PentusAiML}. 
Therefore, the complexity for the restricted case of $\CCu$ (where we
have only one division, and ${!}$ can be applied only to variables) yet should be 
studied. It belongs to NP (by our Theorem~\ref{Th:NP}), and the question 
is whether this fragment is poly-time decidable or
NP-hard.
Recall that in the unrestricted case we've proved
undecidability not only for the whole $\CC$, but also for its one-division fragment,
$\CCu$.

Another interesting question is whether our decidability result (Theorem~\ref{Th:NP})
can be extended to the situation where ${!}$ can 
be applied to formulae of Horn depth 1, {\em i.e.,} formulae,
in which all denominators of $\SL$ and $\BS$ are primitive types,
for instance, $(p \BS (q \SL r)) \SL s$.
Notice that if we allow formulae of Horn depth 2 (of the form
$r \SL (p \SL q)$) under ${!}$, then we immediately get undecidability
(see Section~\ref{S:undec}).

Our encoding in Theorem~\ref{Th:deduction} 
actually shows that {\em grammars} based on $\CC$ can generate
arbitrary recursively enumerable languages. On the other hand,
pure Lambek grammars generate precisely context-free 
languages~\cite{Pentus1997JSL}. Moreover, this holds also
in the so-called strong sense, {\em i.e.,} context-free grammars
and Lambek grammars can assign the same Montague-style semantic values to the
words derived~\cite{KanSalv}\cite{Kuzn2015TrMIAN}. The question
is what class of grammars in the Chomsky hierarchy corresponds to
grammars based on the fragment of $\CC$, restricted as in Theorem~\ref{Th:NP}, and
could one add Montague-style semantics to such grammars.

\subsection*{Acknowledgements}

Stepan Kuznetsov's research was supported by the Russian Foundation
for Basic Research 
(grants 15-01-09218-a and 14-01-00127-a)
and by the Presidential Council for Support of Leading
Scientific Schools
(grant N\v{S}-9091.2016.1). Max Kanovich's research was partially supported
by EPSRC. Andre Scedrov's research was partially supported by ONR.

This research was performed in part during visits of Stepan Kuznetsov and Max Kanovich
to the University of Pennsylvania. We greatly appreciate support of the
Mathematics Department of the University.
A part of the work was also done during the stay of Andre Scedrov at
the National Research University Higher School of Economics.
 We would like to thank 
S.~O.~Kuznetsov and I.~A.~Makarov for hosting there.

The paper was prepared in part within the framework of the Basic
Research Program at the National Research University Higher
School of Economics (HSE) and was partially supported within the
framework of a subsidy by the Russian Academic Excellence Project
`5--100'. 

We are indepted to the participants of the research seminars
``Logical Problems in Computer Science'' and ``Algorithmic Problems in
Algebra and Logic'' at Moscow (Lomonosov) University, in particular, S.~I.~Adian, L.~D.~Beklemishev,
V.~N.~Krupski, I.~I.~Osipov, F.~N.~Pakhomov, M.~R.~Pentus, D.~S.~Shamkanov, I.~B.~Shapi\-rovsky, 
V.~B.~Shehtman, A.~A.~Sorokin, T.~L.~Yavorskaya, and others for fruitful
discussions and suggestions that allowed us to improve our presentation
significantly.


\begin{thebibliography}{XX}
\bibitem{Abrusci90}
V. M. Abrusci. A comparison between Lambek syntactic calculus and
intuitionistic linear propositional logic. Zeitschr. f\"ur math. Logik und
Grundl. der Math. (Math. Logic Quart.), Vol.~36, 1990. P.~11--15.
\bibitem{Buszko1982} W. Buszkowski. Some decision problems
in the theory of syntactic categories. Zeitschr.
f\"ur math. Logik und Grundl. der Math. (Math. Logic
Quart.) Vol.~28, 1982. P. 539--548.
\bibitem{Buszko2005} W. Buszkowski. Lambek calculus with
nonlogical axioms. Language and Grammar (CSLI Lect.
Notes, vol.~168), 2005. P.~77--93.
\bibitem{Carpenter} B. Carpenter. Type-logical
semantics. MIT Press, 1998.
\bibitem{Girard87}
J.-Y. Girard. Linear logic. Theor. Comput. Sci., Vol.~50,
No.~1, 1987. P.~1--102.
\bibitem{KanSalv} M. Kanazawa, S. Salvati. The string-meaning relations
definable by Lambek grammars and context-free grammars. Proc. Formal Grammar
'12/'13 (LNCS vol.~8036), Springer, 2013. P.~191--208.
\bibitem{KanKuzSce} M. Kanovich, S. Kuznetsov,
A. Scedrov. On Lambek's restriction in the presence of
exponential modalities. Proc. LFCS '16 (LNCS
vol.~9537), Springer, 2015. P.~146--158.
\bibitem{Kuzn2015TrMIAN} S.~L.~Kuznetsov.
On translating context-free grammars into Lambek grammars. 
Proc. Steklov Inst. Math., Vol.~290, 2015. P.~63--69.
\bibitem{Lambek1958} J. Lambek. The mathematics of sentence
structure. Amer. Math. Monthly, Vol.~65, No.~3, 1958. P. 154--170.
\bibitem{Lambek1961} J. Lambek.
On the calculus of syntactic types.
Structure of Language and Its Mathematical Aspects
(Proc. Symposia Appl. Math., vol.~12), AMS, 1961.
P.~166---178.
\bibitem{LMSS} P. Lincoln, J. Mitchell, A. Scedrov, N. Shankar. Decision problems for propositional linear logic.
Annals of Pure and Applied Logic, Vol.~56, Iss.~1--3, 1992. P.~239--311.
\bibitem{MorVal} G. Morrill, O. Valent\'{\i}n. Computational coverage of TLG: Nonlinearity. Proc. NLCS '15 (EPiC Series, vol.~32), 2015. P.~51--63
\bibitem{Pentus1997JSL}
M. Pentus. Product-free Lambek calculus and context-free grammars.
J. Symb. Log., Vol.~62, No.~2, 1997. P.~648--660.
\bibitem{Pentus2006} M. Pentus.
Lambek calculus is NP-complete. Theor. Comput. Sci., Vol.~357, 2006.
P.~186--201.
\bibitem{PentusAiML}
M. Pentus. Complexity of the Lambek calculus and its fragments.
Advances in Modal Logic, vol.~8, College Publications, 2010. P.~310--329.
\bibitem{Savateev2008}
Yu. Savateev. Lambek grammars with one division
are decidable in polynomial time. Proc. CSR '08 (LNCS vol.~5010),
Springer, 2008. P.~273--282.
\bibitem{Savateev2009}
Yu. Savateev. Product-free Lambek calculus is NP-complete.
Proc. LFCS '09 (LNCS vol.~5407), Springer, 2009. P.~380--394.
\bibitem{Yetter90}
D. N. Yetter. Quantales and (noncommutative) linear logic. J. Symb. Log. 
Vol.~55, No.~1, 1990. P.~41--64.
\end{thebibliography}
\end{document}